\newcommand{\msf}[1]{{\mathsf {#1}}}
\newcommand{\mcal}[1]{{\mathcal {#1}}} 
 \newtheorem{theorem}{Theorem} 
 \newtheorem{lemma}[theorem]{Lemma}
\newtheorem{proposition}[theorem]{Proposition}
\theoremstyle{definition}
\theoremstyle{definition}\newtheorem{definition}[theorem]{Definition}
\theoremstyle{definition}
\theoremstyle{definition}
\theoremstyle{definition}
\theoremstyle{definition}
\theoremstyle{plain}\newtheorem*{lemma*}{Lemma}
\theoremstyle{plain}\newtheorem*{theorem*}{Theorem}
\theoremstyle{plain}\newtheorem*{proposition*}{Proposition}
\theoremstyle{plain}\newtheorem*{corollary*}{Corollary}
\theoremstyle{remark}\newtheorem*{remark*}{Remark}
\theoremstyle{remark}\newtheorem*{remarks*}{Remarks}
\theoremstyle{definition}\newtheorem*{conjecture*}{Conjecture}
\theoremstyle{definition}\newtheorem*{definition*}{Definition}
\theoremstyle{definition}\newtheorem*{definitions*}{Definitions}
\theoremstyle{definition}\newtheorem*{example*}{Example}
\theoremstyle{definition}\newtheorem*{question*}{Question}
\theoremstyle{definition}\newtheorem*{questions*}{Questions}
\theoremstyle{definition}\newtheorem*{hypothesis*}{Hypothesis}
\def\qed{\ifhmode\unskip\nobreak\fi\ifmmode\ifinner\else\hskip5pt\fi\fi

\hfill\hbox{\hskip5pt\vrule width4pt height6pt depth1.5pt\hskip1pt}}
\newcommand{\ov}[1]{\ensuremath{\overline{#1}}} 
\newcommand{\dimply}{\ensuremath{\:\Longleftrightarrow\:}}
\newcommand{\RR}{\ensuremath{{\mathbb R}}}     
\newcommand{\R}[1]{\ensuremath{{\mathbb R}^{#1}}} 
\newcommand{\QQ}{\ensuremath{{\mathbb Q}}}     
\newcommand{\ZZ}{\ensuremath{{\mathbb Z}}}	   
\newcommand{\NN}{\ensuremath{{\mathbb N}}}	   
\newcommand{\Np}{\ensuremath{{\mathbb N}_{+}}}  
\newcommand{\Qp}{\ensuremath{{\mathbb Q}_+}}	   
\newcommand{\co}{\colon\thinspace} 
\def\empty{\varnothing}
\newcommand{\sm}[1]{\ensuremath{\setminus #1}}
\newcommand{\p}{\ensuremath{\partial}}
  \def\mylabel#1{\label{#1}}
\renewcommand{\v}{\varphi}
 \def\P{{\mcal P}}
 \def\F{{\mcal F}}
 \def\E{{\mcal E}}
\begin{document}
\title{\LARGE \bf  Monotone flows with dense periodic orbits} 
\author{\Large Morris W.\ Hirsch \thanks{I thank {\sc Eric Bach,
    Bas Lemmens}  and {\sc Janusz Mierczy\'nski} for helpful
    discussions.}   \\University of Wisconsin}

\maketitle


 \begin{abstract} 
  Give $\R n$ the (partial) order $\succeq$ determined by a closed
  convex cone $K$  having nonempty interior: $y\succeq x \dimply y-x \in K$.
  Let  $X\subset\R n$ be a connected open set and $\v$  a  flow on
  $X$ that is monotone for this order: if $y\succeq x$ and $t\ge 0$,
  then $\v^ty\succeq \v^t y$.

   {\em Theorem.} If  periodic points of $\v$ are dense in $X$,  then 
   $\v^t$ is the identity map of $X$ for  some $t \ge 0$.


   \end{abstract}
 
 \tableofcontents

 \section{Introduction}   \mylabel{sec:intro}

 Many dynamical systems, especially those used as models in applied
 fields, are {\em monotone}: the state space has an order relation
 that is preserved in positive time. A recurrent theme is that bounded
 orbits tend toward periodic orbits. For a sampling of the large
 literature on monotone dynamics, consult the following works and the
 references therein: 
{\small \cite{AngeliHirschSontag, Anguelov12, BenaimHirsch99a,
    BenaimHirsch99b, DeLeenheer17, Dirr15, Elaydi17, EncisoSontag06,
    Grossberg78, Hess-Polacik93, HS05, Kamke32, LajmanovichYorke,
    Landsberg96, LeonardMay75, Matano86, Mier94, Potsche15,
    RedhefWalter86, Selgrade80, Smale76, Smith95, Smith17, Volkmann72,
    Wang17, Walcher01}.}


Another common dynamical property is {\em dense periodicity}: periodic
points are dense in the state space.  Often considered typical of
chaotic dynamics, this condition is closely connected to many other
important dynamical topics, such as structural stability, ergodic
theory, Hamiltonian mechanics, smoothness of flows and
diffeomorphisms. 

The main result in this article, Theorem \ref{th:main}, is that a large class of
monotone, densely periodic flows $\v:=\{\v^t\}_{t\in\RR}$ on open
subsets of $\R n$ are
(globally) {\em periodic}: There exists $t>0$ such that $\v^t$ is the
identity map.

\subsection{Terminology}
Throughout this paper $X$ and $Y$ denote (topological) spaces.
The closure of a subset of a space  is   $\ov S$.  
 The group of homeomorphisms of $X$ is  $\mcal H (X)$.
 
$\ZZ$ denotes the integers, $\NN$ the nonnegative integers, and $\Np$
 the positive integers. $\RR$ denotes the reals, $\QQ$ the rationals,
 and $\Qp$ the positive rationals. 
$\R n$ is Euclidean $n$-space.

Every manifold $M$ is assumed  metrizable with empty boundary $\p
M$, unless otherwise indicated.

Every map is assumed continuous unless otherwise described.  $f\co
X\approx Y$ means $f$ maps $X$ homeomorphically onto $Y$.  The
identity map of $X$ is $id_X$.

If $f$ and $g$ denote maps, the composition of $g$ following $f$  is the map
$g\circ f\co x\mapsto g(f(x))$ (which may be empty).
 We set $f^0:=id_X$, and recursivelydefine the 
 $k$'th iterate $f^k$ of $f$ as $f^k:=f\circ f^{k-1}, \ (k \in \Np)$.

The {\em orbit} of $p$ under $f$ is the set
\[\mcal O(p):=\big\{f^k (p)\co k\in \NN\big\},
\]
denoted as $\mcal O (p,f)$ to record $f$.

When $f (X) \subset X$, the orbit of $P\subset X$ is defined as
\[  
\mcal O (P)= \mcal O (P,f) := \bigcup_{p\in P}\mcal O (p).
\]
A set $A\subset X$ is {\em invariant} under $f$ if $f(A)=A =f^{-1}
(A)$.  The {\em fixed set} and {\em periodic set} of $f$ are the
respective invariant sets
\[
 \F(f) :=\big\{x\co f(x)=x\big\}, \qquad \P(f):= \bigcup_{n\in \Np}
 \F(f^n).
\]

\paragraph{Flows} 
A {\em flow} $\psi:=\{\psi^t\}_{t\in \RR}$ on $Y$, denoted formally by
$(\psi, Y)$, is a continuous action of the group $\RR$ on $Y$: a
family of homeomorphisms $\psi^t\co Y\approx Y$ such that the map
\[
\RR \to \mcal H (Y), \quad t\mapsto \psi^t
\]
is a homomorphism, and the {\em evaluation map}
  \begin{equation}\label{eq:ev}
 \msf {ev}_\psi\co\RR \times Y \to Y, \qquad (t,x) \mapsto \psi^tx
\end{equation}
  is   continuous.  

A set $A\subset Y$ is  {\em invariant under $\psi$}  if it is invariant under
every $\psi^t$.  When this holds,  the {\em restricted flow}
$(\psi \big |A)$ on $A$  has the evaluation map
$\msf {ev}_\psi\big|\,\RR \times A$.

The {\em orbit} of $y\in Y$ under $\psi$ is the invariant set
\[
   \mcal O (y):=\big \{\psi^t y\co t\in \RR \big\},
   \]
denoted formally  by $\mcal O (y, \psi)$.  
Orbits of distinct points
either coincide or are disjoint.

The 
{\em periodic} and {\em equilibrium} sets of $\psi$ are the respective
invariant
sets
\[
   \P(\psi):=\bigcup_{t\in\RR}\F(\psi^t), \qquad
   \E(\psi):= \bigcap_{t\in\RR} \F(\psi^t),
   \]
denoted by $\P$ and $\E$ if  $\psi$ is clear from the context.
Points in $\E$ are called {\em stationary}.  The orbit of a
nonstationary periodic point is a {\em cycle}. 

The {\em period} of $p\in Y$ is   $r=\msf{per}(p)>0$, provided
\[p\in \P\sm \E, \quad r=\min\big\{t>0\co \psi^tp=p\big\}  
\]
The set of  {\em $r$-periodic} points is
\[\P^r (\Psi) =\P^r:= \big\{p\in \P\co\msf{per} (p)=r\big\}\]

When $p$ is $r$-periodic, the restricted flow $\psi \big|O(p)$ is
 topologically conjugate to the rotational flow on the topological
 circle $\RR/r\ZZ$,  covered by the translational flow on $\RR$
 whose evaluation map is $(t,x) \mapsto t+x$.
 
The flow $(\psi, Y)$ is:
\begin{itemize} 
  
\item {\em trivial} if $\E =Y$,

\item {\em densely periodic} if $\P $ is dense in $Y$,

\item {\em pointwise periodic} if $\P  = Y$,

\item {\em periodic} if there exists  $l > 0$ such that  $\psi^l =id_Y$.
  Equivalently: the homomorphism $\psi \co \RR \to \mcal H (Y)$
  factors through a homomomorphism of the circle group $\RR/l\ZZ$. 
\end{itemize}

\paragraph{Order} Recall that a  (partial)  {\em order} on $Y$ is a
binary relation $\preceq$ on $Y$ that is reflexive, transitive and
antisymmetric.
In this paper all orders are {\em closed}:
the set $\{(u,v)\in Y\times Y\co u\preceq v\}$ is closed.
  The  {\em trivial order} is:   $x\preceq y \dimply x=y$.

 The pair
$(Y,\preceq)$ is an {\em ordered space}, denoted by $Y$ if 
the order is clear from the context.

We write $y\succeq x$ to mean $x\preceq y$.  If $x\preceq y$ and $x\ne
y$, we write $x\prec y$ and $ y \succ x$.
  For sets $A, B\subset Y$, the condition $A\preceq B$ means $a\preceq
  b$ for all $a\in A, \,b\in B$, and similarly for the relations
  $\prec,\, \succeq, \,\succ$.

The {\em closed order interval} spanned by $a, b \in Y$ is the closed
set
\[ [a,b]:=\{y\in Y\co a \preceq y\preceq b\},\]
and its interior is the {\em open order interval} $[[a,b]]$.  We write
$a\ll b$ to indicate $[[a,b]]\ne\varnothing$.

An {\em order cone}
 $K\subset \R n$ is  a closed convex cone that is
pointed (contains no affine line) and  solid (has nonempty
interior).  $K$ is {\em polyhedral} if is the intersection of finitely
many closed linear halfspaces. 

The {\em $K$-order}, denoted by $\preceq_K$ for clarity, is
the order defined  on every subset of $\R n$ by 
\[
 x \preceq_K y \dimply y-x\in K.
\]
For the $K$-order on
$\R n$,   every  order interval is a convex $n$-cell.

 A map $f\co X \to Y$ between ordered spaces is {\em monotone} when
 \[x\preceq_X x' \implies f(x) \preceq_Y f (x').
 \]
 When $Y$ is ordered and $g\co Y'\to Y$ is injective, $Y'$ has a unique
 {\em induced order} making $g$ monotone.

Every subspace $S\subset Y$ is given the order induced by the
inclusion map $S\hookrightarrow Y$.  When this order is trivial, $S$
is {\em unordered}.

 A flow $\psi$ on an ordered space is {\em monotone} iff the maps
 $\psi^t$ are monotone.
It is easy to see that  every cycle for a monotone flow is unordered.

This is the chief result:
\begin{theorem}\mylabel{th:main}
Assume:  
  \begin{description}
 
  \item[(H1)] $K\subset \R n$ is an order cone.

  \item[(H2)] $X\subset \R n$ is open and connected.

  \item[(H3)] $X$ is $K$-ordered.

  \item[(H4)] The flow $(\v, X)$ is monotone and densely periodic.

\end{description}
  Then $\v$ is  periodic.
\end{theorem}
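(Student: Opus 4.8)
\noindent\emph{Plan.}\quad I would prove the theorem in three stages: (a) show that the family $\{\v^t\co t\in\RR\}\subset\mcal{H}(X)$ is equicontinuous on compact subsets of $X$ and that every orbit lies in a compact subset of $X$; (b) deduce that the enveloping semigroup $\mcal{G}:=\ov{\{\v^t\co t\in\RR\}}$, taken with the topology of uniform convergence on compacta, is a compact connected abelian Lie group, hence a torus $\mcal{G}\cong(\RR/\ZZ)^k$ acting effectively on $X$; (c) use density of $\P$ to force $k\le1$, which is exactly the assertion that $\v$ is periodic (trivially so when $k=0$). Stage (a) is where the order structure does the real work and is, I expect, the main obstacle; stages (b) and (c) are comparatively routine.

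\smallskip
\noindent\emph{Stage (a).}\quad Fix $x_0\in X$. Since $\P$ is dense and $K$ is solid, there are periodic points $a\ll x_0\ll b$ arbitrarily close to $x_0$, and monotonicity gives $\v^tx\in[\v^ta,\v^tb]$ for all $x\in[[a,b]]$ and all $t\ge0$. For a fixed order cone one has $\operatorname{diam}[p,q]\le L(K)\abs{q-p}$, because $K\cap(q-p-K)$ is compact and scales linearly in $q-p$. Thus everything reduces to a \emph{small-cycle lemma}: every neighbourhood of $x_0$ contains periodic points $a\ll x_0\ll b$ whose cycles $\mcal{O}(a),\mcal{O}(b)$ have arbitrarily small diameter (and so lie in $X$). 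Granting it, $\v^t$ maps the neighbourhood $[[a,b]]$ of $x_0$ into an order interval of diameter $O(\eps)$ for \emph{all} $t\ge0$, which yields equicontinuity and forward precompactness near $x_0$, hence (covering $X$) on all of $X$. To prove the lemma one nests the sandwich: choose periodic $a''\ll a'\ll a\ll x_0\ll b\ll b'\ll b''$ all within $\eps$ of $x_0$, and apply the monotone maps $\v^{k\tau}$ with $\tau$ a period of $a''$ or of $b''$. When the periods in play are pairwise incommensurable the sequences $\{\v^{k\tau}a\}_{k\ge0}$ and $\{\v^{k\tau}b\}_{k\ge0}$ are dense in the cycles $\mcal{O}(a),\mcal{O}(b)$ by minimality of an irrational rotation, and closedness of the order then forces $\mcal{O}(a),\mcal{O}(b)\subset[a'',b'']$, an order interval of diameter $O(\eps)$. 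The commensurable cases require a separate argument with $\v^{l}$ for a common period $l$; in the extreme case that a dense set of nearby periodic points shares a period $l$ one even obtains $\v^{l}=id$ on a whole neighbourhood of $x_0$ outright. The genuinely hard part is the bookkeeping of the commensurability relations among the periods of $a'',\dots,b''$ and the verification that points realizing the relations one needs actually occur in every neighbourhood of $x_0$.

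\smallskip
\noindent\emph{Stages (b) and (c).}\quad By Ellis's theorem the enveloping semigroup of an equicontinuous flow is a compact topological group of homeomorphisms, so $\mcal{G}$ is such a group (in particular $\v^{-t}\in\mcal{G}$, whence every orbit is precompact in $X$); it is abelian because $\{\v^t\}$ is and connected because it is the closure of a continuous image of $\RR$, and it acts effectively on $X$ since its elements are homeomorphisms of $X$. By the solution of Hilbert's fifth problem a compact group acting effectively on a manifold is a Lie group, so $\mcal{G}\cong(\RR/\ZZ)^k$. For an equicontinuous flow $\ov{\mcal{O}(p)}=\mcal{G}p\cong\mcal{G}/\mcal{G}_p$, and $p$ is periodic exactly when $\ov{\mcal{O}(p)}$ is a circle or a point, i.e.\ when $\mcal{G}_p$ is a closed subgroup of $\mcal{G}$ of dimension $\ge k-1$. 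There are only countably many such subgroups $H$, so $\P\subset\bigcup_H\Fix{H}$ is a countable union of closed sets; since $\P$ is dense and $X$ is a Baire space, some $\Fix{H_0}$ has nonempty interior. If $k\ge2$ then $\dim H_0\ge1$, so $H_0$ is a nontrivial compact Lie group acting effectively (as a subgroup of $\mcal{G}$) on the connected manifold $X$, and Newman's theorem says the fixed set of such an action has empty interior --- a contradiction. Hence $k\le1$, and $\v$ is periodic.
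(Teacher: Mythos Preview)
Your route is genuinely different from the paper's. The paper never proves equicontinuity or invokes enveloping semigroups. Instead it shows a local \emph{resonance} phenomenon: near any non-equilibrium $x_0$ there is an order interval $[p,q]\ni x_0$ in which every pair of periodic points has rationally related periods (this is Lemma~\ref{th:colimit} plus Theorem~\ref{th:resglobal}). Resonance lets one pass to a single time map $\v^s$ that is monotone and densely periodic on the invariant open set $\mcal O([[p,q]])$; the Lemmens--van~Gaans--van~Imhoff theorem then makes $\v^s$ periodic there, so $x_0\in\P(\v)$, and Montgomery's pointwise-periodicity theorem finishes.

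The gap you flag in Stage~(a), the commensurable case of the small-cycle lemma, is not a side case you can hope to dodge: by the resonance result just quoted it is the \emph{only} case that occurs. Any nested configuration $a''\ll a\ll x_0\ll b\ll b''$ chosen inside a sufficiently small order interval around a non-equilibrium $x_0$ will have all five periods pairwise commensurable, so your irrational-rotation density argument is never available where you need it. Your ``extreme'' subcase (a common period $l$ shared by a dense set near $x_0$) is not forced either: resonance only puts the nearby periods into $r_0\Qp$, and a Baire argument on $\bigcup_{q\in\Qp}\F(\v^{qr_0})$ yields $\v^{q_0r_0}=id$ on \emph{some} open set, not one you can arrange to contain $x_0$. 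I do not see how to close this without something of the strength of the Lemmens theorem, which is exactly the black box the paper imports; once you use it, the paper's two-line reduction via Montgomery is shorter than your torus machinery.

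Stages~(b) and~(c) look correct and are an attractive alternative to Montgomery's theorem: Arzel\`a--Ascoli plus Ellis plus Montgomery--Zippin make $\mcal G$ a torus, and the Baire/Newman argument forcing rank $\le 1$ is clean. But the whole edifice rests on Stage~(a), and as written that stage does not go through.
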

\noindent The proof will be given after results about various types of
flows.

\section{Resonance}   \mylabel{sec:resonance}

In this section:
\begin{itemize}
\item $Y$ is an ordered space,

\item $(\psi,Y)$ is a monotone flow,

\item
  $ \mcal O (y)$ is the orbit of $y$ under $\psi$,

\item $\mcal O (y, \psi^t)$  is the orbit of $y$ under  $\psi^t$.
\end{itemize}
 \begin{lemma}\mylabel{th:colimit}
Assume:
\begin{description}

   \item[(i)] \mbox{$\big\{p_k\big\}$ and $ \big\{q_k\big\}$ are seqences in $Y$ converging to
     $y\in Y$},

   \item[(ii)] $\mcal O (p_k) \prec \mcal O(q_k), \quad (k \ge 1).$

    \end{description}
Then  $y\in \E$.
 \end{lemma}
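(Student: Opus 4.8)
The plan is to fix an arbitrary $t\in\RR$ and prove $\psi^t y = y$; since $\E(\psi)=\bigcap_{t\in\RR}\F(\psi^t)$, establishing this for every $t$ gives $y\in\E$. The key point is that hypothesis (ii) compares whole orbits, so it provides comparisons between orbit points evaluated at \emph{different} times, and only two such comparisons are needed.

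First I would extract from (ii), for each $k\ge 1$, the two relations
\[
\psi^t p_k \prec q_k, \qquad p_k \prec \psi^t q_k,
\]
valid because $\psi^t p_k$ and $p_k=\psi^0 p_k$ lie in $\mcal O(p_k)$, while $q_k=\psi^0 q_k$ and $\psi^t q_k$ lie in $\mcal O(q_k)$, and $\mcal O(p_k)\prec\mcal O(q_k)$. This works for every real $t$ — positive, negative, or zero — since orbits under a flow are indexed by all of $\RR$.

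Next I would let $k\to\infty$. By continuity of the evaluation map $\msf{ev}_\psi$ and (i), we have $\psi^t p_k \to \psi^t y$, $\psi^t q_k \to \psi^t y$, $p_k\to y$, and $q_k\to y$. Because the order on $Y$ is closed, the first displayed relation passes to the limit as $\psi^t y \preceq y$ and the second as $y \preceq \psi^t y$; antisymmetry of the order then forces $\psi^t y = y$. Since $t\in\RR$ was arbitrary, $y\in\bigcap_{t\in\RR}\F(\psi^t)=\E$.

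I do not expect any real obstacle here: monotonicity of $\psi$ is not even used, and the only things needing care are (a) choosing the two comparisons so that the strict relations $\prec$ degrade harmlessly to the non-strict $\preceq$ that survive a limit — which is fine, since $\psi^t y\preceq y$ together with $y\preceq\psi^t y$ already yields equality — and (b) noting that the argument treats negative $t$ on exactly the same footing as positive $t$, so no separate case analysis is required.
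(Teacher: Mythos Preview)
Your argument is correct and follows essentially the same route as the paper's proof: extract from (ii) an inequality between orbit points at different times, pass to the limit using continuity of the flow and closedness of the order, and conclude $\psi^t y=y$ for all $t$. The only difference is that the paper records just the single relation $p_k\preceq\psi^t q_k$, obtains $y\preceq\psi^t y$ for all $t$, and then invokes monotonicity of $\psi^t$ (applied to $y\preceq\psi^{-t}y$) to get the reverse inequality; you instead pull \emph{both} comparisons $\psi^t p_k\prec q_k$ and $p_k\prec\psi^t q_k$ directly from (ii) and finish with antisymmetry alone. Your observation that monotonicity is not actually needed is correct and is a small simplification over the paper's version.
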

\begin{proof} By (i) and  continuity of $\psi$ we have
  \begin{equation}\label{eq:limvt}
 \lim_{k\to \infty}\psi^t p_k =   \lim_{k\to \infty}\psi^t q_k =\psi^t y, \qquad (t\in \RR)
 \end{equation}
while (ii) and monotonicity of $\psi$ imply
   \begin{equation}\label{eq:pktqk}
k\in \NN, \  t\in \RR \implies  p_k \preceq \psi^t q_k.
\end{equation}
   From  (i),  (\ref{eq:limvt}). (\ref{eq:pktqk}) and continuity of $\psi$ we infer:
\[ y\preceq \psi^t y,\qquad (t\in \RR). \]
Therefore  monotonicity shows that $\psi^ty=y$ for all $t\in \RR$.
\end{proof}

\paragraph{Rationality}  Rational numbers play a surprising role in monotone flows: 
 \begin{theorem}        \mylabel{th:poa}
 Assume $r,s >0$.  If
\[ p \in \mcal P^r, \quad  q \in \P^s, \quad p \prec  q,
\quad \text{and} \quad  \mcal O (p) \not \prec\mcal O (q),
\]
then $r/s$ is rational.
\end{theorem}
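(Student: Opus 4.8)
The plan is to argue by contradiction: assume $r/s$ is irrational and build two sequences satisfying the hypotheses of Lemma \ref{th:colimit} with a limit point that is periodic (hence lies in a cycle, hence is unordered against itself), forcing a contradiction with the conclusion $y \in \E$ — or, more directly, to produce from the orbits $\mcal O(p)$ and $\mcal O(q)$ two comparable points on the \emph{same} cycle, contradicting the fact that cycles are unordered. Concretely, since $p \prec q$ but $\mcal O(p) \not\prec \mcal O(q)$, there is some $t_0$ with $\psi^{t_0} q \not\succ p$; combined with monotonicity applied to $p \preceq q$ (which gives $\psi^t p \preceq \psi^t q$ for all $t$) and the fact that $p \prec q$ propagates to $\psi^t p \preceq \psi^t q$ for all $t \geq 0$, the failure of strict comparability must come from the backward direction or from the interplay of the two periods.

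The key mechanism I expect to use is the following. Because $p$ is $r$-periodic and $q$ is $s$-periodic, the function $t \mapsto (\psi^t p, \psi^t q)$ is periodic on the torus-like object $\RR/r\ZZ \times \RR/s\ZZ$. If $r/s$ were irrational, the orbit $\{(\psi^t p, \psi^t q) : t \in \RR\}$ would be dense in $\mcal O(p) \times \mcal O(q)$ (the closure of a line of irrational slope on a torus). Now from $p \prec q$ we have $q - p \in \operatorname{int} K$ after possibly using that order intervals are $n$-cells — actually $p \prec q$ only gives $q - p \in K \setminus \{0\}$, so I would first need to upgrade this: monotonicity of the flow plus density would let me find a nearby strict inequality, or I invoke that $X$ is open so small perturbations stay in $X$. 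Using density of the line on the torus, for \emph{every} pair $(a,b) \in \mcal O(p) \times \mcal O(q)$ there is a sequence $t_k$ with $\psi^{t_k} p \to a$ and $\psi^{t_k} q \to b$; applying $\psi^{t_k}$ to $p \preceq q$ and passing to the limit (closedness of the order) gives $a \preceq b$. Hence $\mcal O(p) \preceq \mcal O(q)$, and in fact one pushes this to $\mcal O(p) \prec \mcal O(q)$ using the strict inequality at the base point together with the homeomorphism property of each $\psi^t$ — contradicting $\mcal O(p) \not\prec \mcal O(q)$.

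So the skeleton is: (1) record that $p \preceq q$ implies $\psi^t p \preceq \psi^t q$ for all $t \in \RR$, so $\mcal O(p) \preceq \mcal O(q)$ always holds; (2) therefore the hypothesis $\mcal O(p) \not\prec \mcal O(q)$ means there exist $a \in \mcal O(p)$, $b \in \mcal O(q)$ with $a \preceq b$ but not $a \prec b$, i.e. $a = b$ (if the order is such that $a \preceq b$, $a \neq b$ forces $a \prec b$ — which holds by definition of $\prec$); (3) so $\mcal O(p) \cap \mcal O(q) \neq \varnothing$, and since orbits are equal or disjoint, $\mcal O(p) = \mcal O(q)$ — but then $p$ and $q$ lie on a common cycle, which is unordered, contradicting $p \prec q$. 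This is cleaner and may not even need the irrationality detour; the role of rationality must then enter through step (1) or through ensuring $\mcal O(p) = \mcal O(q)$ is genuinely impossible unless $r = s$, which it isn't since a cycle has a single well-defined period. Wait — if $\mcal O(p) = \mcal O(q)$ then $r = \operatorname{per}(p) = \operatorname{per}(q) = s$, and $r/s = 1$ is rational, which is exactly the conclusion. So the contradiction is not with unorderedness but simply: \emph{either} $\mcal O(p) \prec \mcal O(q)$ (excluded by hypothesis) \emph{or} $\mcal O(p) = \mcal O(q)$ giving $r = s$.

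The main obstacle I anticipate is step (2)–(3): justifying that the failure of $\mcal O(p) \prec \mcal O(q)$, given $\mcal O(p) \preceq \mcal O(q)$, really does force $a = b$ for some pair rather than merely $a \preceq b$ non-strictly at a single incomparable-in-the-strict-sense pair — here I must be careful that "$A \prec B$" as defined means $a \prec b$ for \emph{all} $a \in A, b \in B$, so its negation only gives \emph{one} pair with $a \not\prec b$, i.e. $a \preceq b$ and $a = b$ (since $a \preceq b$ always holds). That pins down $a = b$ cleanly, and then $\mcal O(p) = \mcal O(q)$ follows from orbits being equal-or-disjoint, yielding $r = s$ and hence $r/s \in \QQ$. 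If instead one wants the genuinely irrational case to be excluded more robustly (e.g. the problem intends $r \neq s$), the torus-density argument of the previous paragraph supplies $\mcal O(p) \prec \mcal O(q)$ directly, again contradicting the hypothesis. I would present the short argument first and keep the density argument as the engine showing that partial comparability of two periodic orbits propagates everywhere along them.
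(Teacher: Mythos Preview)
Your torus-density argument is correct and is close in spirit to the paper's proof, but your ``short argument'' contains a genuine error that you do not clearly flag. In step (1) you claim that $p\preceq q$ implies $\psi^t p\preceq\psi^t q$ for all $t$, ``so $\mcal O(p)\preceq\mcal O(q)$ always holds.'' The first implication is fine, but the second is false: the relation $\mcal O(p)\preceq\mcal O(q)$ requires $\psi^t p\preceq\psi^u q$ for \emph{all} pairs $(t,u)$, not just for $t=u$. This is exactly where irrationality is needed, and without it the argument collapses (indeed, if it worked you would have proved $r=s$ in all cases, which is stronger than the theorem and false). You seem to sense something is off (``the role of rationality must then enter through step (1)''), but you never identify the mistake; instead you look for the obstacle in steps (2)--(3), which are actually fine once $\mcal O(p)\preceq\mcal O(q)$ is established.

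Your salvageable proof---assume $r/s$ irrational, use density of the line $t\mapsto(\psi^t p,\psi^t q)$ in $\mcal O(p)\times\mcal O(q)$, pass to the limit in $\psi^t p\preceq\psi^t q$ to get $\mcal O(p)\preceq\mcal O(q)$, then upgrade to $\prec$ via disjointness of the orbits (since $p\prec q$ and cycles are unordered)---is correct. The paper does essentially the same thing but with a one-dimensional shortcut: rather than invoking density on a $2$-torus, it applies the iterates $\psi^{ks}$ to $p\prec q$, noting that $\psi^{ks}q=q$, so $\psi^{ks}p\preceq q$ for all $k$. Since $\psi^s$ restricted to the circle $\mcal O(p)$ is an irrational rotation, the set $\{\psi^{ks}p\}$ is already dense in $\mcal O(p)$, giving $\mcal O(p)\preceq\{q\}$; disjointness and monotonicity then yield $\mcal O(p)\prec\mcal O(q)$. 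This needs only the classical density of a single irrational rotation rather than Kronecker on a torus, and it avoids the detour through Lemma~\ref{th:colimit}, which is not used here.
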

 \begin{proof}
We have
 \begin{equation}                \label{eq:gama}
   \mcal  O (p) \cap \mcal O (q)=\empty
 \end{equation}
 because $p \prec q$
 and every cycle in a monotone flow us unordered.
 
Note that  the restriction of $\psi^s$ to the  circle $\mcal O(p)$ 
is conjugate to the rotation $\msf R_{2\pi r/s}$ of the unit circle $ \msf
C \subset\R 2$ through  $2\pi r/s$ radians.

Arguing by contradiction, we provisionally assume  $r/s$ is irrational.
Then the orbit of $\msf R_{2\pi r/s}$ is dense in $\msf
C$,\footnote{This was discovered--- in the 14th century!---  by  {\sc Nicole
    Oresme}. See {\sc Grant} \cite{Grant71}, {\sc
    Kar} \cite {Kar03}.  A short proof based on the pigeon-hole
  principle is due to {\sc Speyer} \cite{Speyer17}.  Stronger density
  theorems are given in {\sc Bohr} \cite{Bohr23}, {\sc Kronecker}
  \cite{Kronecker}, and {\sc Weyl} \cite{Weyl10, Weyl16}.}
and the cojugacy described above implies
\begin{equation}\label{eq:oppr}
\mcal O(p)=\ov{\mcal O (p,\psi^r)}.
\end{equation}
Since $p\prec q$,   
monotonicity
\[\big(\psi^r\big)^k p \preceq  \big (\psi^s\big)^k q= q,  \quad (k\in \Np),\]
whence
  \[\mcal O (p;\psi^r) \preceq \{q\},
  \]
and (\ref{eq:oppr}) implies
\begin{equation}\label{eq:opq}
\mcal O (p) \prec  \{q\}.
\end{equation}
Therefore  $\mcal O (p) \prec \mcal O(q)$ by monotonicity
(\ref{eq:gama}) and monotonicity.  But this  contradicts the hypothesis.
 \end{proof}
\begin{definition}\mylabel{th:resdef}
  A set $S$  is {\em resonant} for the flow  $(\psi,Y)$ if $S\subset Y$ and 
\[
  u,v\in S\cap(\P \sm \E) \implies
 \frac{\msf{per}(u)}{\msf{per}(v)}\in \Qp.
\]
\end{definition}
It is easy to prove:

\begin{lemma}\mylabel{th:resorbit}
\mbox{If $S$ is resonant, so is its orbit.  \qed}
\end{lemma}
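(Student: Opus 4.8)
The plan is to deduce resonance of the orbit $\mcal O(S)$ from resonance of $S$ by noting that membership in $\P$, membership in $\E$, and the value $\msf{per}(\cdot)$ all depend only on a point's $\psi$-orbit; so passing from $S$ to $\mcal O(S)$ introduces no new values of the period among nonstationary periodic points, and the defining ratio condition transfers verbatim.

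First I would record the orbit-invariance of these notions. Fix $p\in Y$ and let $q=\psi^\tau p$ be any point of $\mcal O(p)$. Since $t\mapsto\psi^t$ is a homomorphism, $\psi^t$ commutes with $\psi^\tau$, so $\psi^t q=\psi^\tau\psi^t p$; applying $\psi^{-\tau}$ shows $\psi^t q=q$ iff $\psi^t p=p$, for every $t\in\RR$. Consequently $q\in\P$ iff $p\in\P$, and $q\in\E$ iff $p\in\E$, and when $p\in\P\sm\E$ the sets $\{t>0\co\psi^t q=q\}$ and $\{t>0\co\psi^t p=p\}$ coincide, hence have the same minimum: $\msf{per}(q)=\msf{per}(p)$.

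Now let $u',v'$ be arbitrary points of $\mcal O(S)\cap(\P\sm\E)$. By the definition of $\mcal O(S)$ there are $u,v\in S$ with $u'\in\mcal O(u)$ and $v'\in\mcal O(v)$; as orbits partition $Y$, this means $\mcal O(u)=\mcal O(u')$ and $\mcal O(v)=\mcal O(v')$. By the previous paragraph $u\in\P\sm\E$ with $\msf{per}(u)=\msf{per}(u')$, and likewise $v\in\P\sm\E$ with $\msf{per}(v)=\msf{per}(v')$. Thus $u,v\in S\cap(\P\sm\E)$, so resonance of $S$ gives $\msf{per}(u)/\msf{per}(v)\in\Qp$; since this ratio equals $\msf{per}(u')/\msf{per}(v')$, the set $\mcal O(S)$ is resonant.

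I do not anticipate a genuine obstacle: the argument uses neither monotonicity nor the order on $Y$, only the flow axioms. The single point that warrants explicit verification — and the only place a careless proof could slip — is the orbit-invariance of the period (and of stationarity), which is why I would state and check it before doing anything else.
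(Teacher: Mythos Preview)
Your argument is correct: the key observation is that $\P$, $\E$, and $\msf{per}(\cdot)$ are constant along $\psi$-orbits, which you verify cleanly from the flow axioms, and from this the resonance condition transfers immediately from $S$ to $\mcal O(S)$. The paper records this lemma as ``easy to prove'' and supplies no argument beyond a \qed, so your proof simply fills in the evident details the paper omits.
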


\begin{proposition}\mylabel{th:resprop}
 $S$ is resonant provided there exists 
\[q \in S\cap (\P \sm \E )
\]
such that
\begin{equation}\label{eq:zsp}
z\in S\cap (\P  \sm \E) \implies 
  \frac{\msf{per} (z)}{\msf{per} (q)} \in \Qp.
\end{equation}
\end{proposition}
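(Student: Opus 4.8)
The plan is to reduce the full two-sided resonance condition of Definition \ref{th:resdef} to the single-witness hypothesis (\ref{eq:zsp}) by using only that $\Qp$ is closed under division. First I would fix arbitrary $u,v\in S\cap(\P\sm\E)$; the goal is precisely to show $\msf{per}(u)/\msf{per}(v)\in\Qp$. Applying (\ref{eq:zsp}) with $z=u$ and then with $z=v$ yields $\msf{per}(u)/\msf{per}(q)\in\Qp$ and $\msf{per}(v)/\msf{per}(q)\in\Qp$. Since $q\in\P\sm\E$, the quantity $\msf{per}(q)$ is a well-defined positive real, so I may write
\[
\frac{\msf{per}(u)}{\msf{per}(v)}=\frac{\msf{per}(u)/\msf{per}(q)}{\msf{per}(v)/\msf{per}(q)},
\]
and the right-hand side is a quotient of two elements of $\Qp$, hence lies in $\Qp$. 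As $u,v$ were arbitrary, this is exactly the defining property of resonance, so $S$ is resonant.

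The manipulation above is legitimate because $\Qp$, being the set of positive rationals, is closed under multiplication and under passage to reciprocals, hence under division. That is the only nontrivial fact in play; notice that no dynamics, monotonicity, or order structure enters this step — it is a purely arithmetic consequence of (\ref{eq:zsp}), with the role of $q$ being simply to provide a common ``denominator'' period against which every other period is commensurable.

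I do not expect any real obstacle here. The one point worth a remark is that the hypothesis supplies an actual element $q\in S\cap(\P\sm\E)$, so the set is nonempty; but even in the degenerate case $S\cap(\P\sm\E)=\varnothing$ the resonance condition holds vacuously, so the statement is correct regardless. (In the larger development this proposition will presumably be combined with Lemma \ref{th:resorbit} and with Theorem \ref{th:poa}, which is what actually produces the commensurability hypothesis (\ref{eq:zsp}) in applications.)
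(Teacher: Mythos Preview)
Your argument is correct and is essentially the same as the paper's: fix $u,v\in S\cap(\P\sm\E)$, apply (\ref{eq:zsp}) to each, and combine to get $\msf{per}(u)/\msf{per}(v)\in\Qp$. The paper writes the combination as the product $\frac{\msf{per}(u)}{\msf{per}(q)}\cdot\frac{\msf{per}(q)}{\msf{per}(v)}$ rather than as a quotient, but this is only a cosmetic difference.
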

\begin{proof} If 
  $ u,v\in S\cap\P (\psi)\sm\E$, then 
  \[
  \frac{\msf{per}(u)}{\msf{per}(v)} =
  \frac{\msf{per}(u)}{\msf{per}(q)}\cdot  \frac{\msf{per}(q)}{\msf{per}(v)},
  \]
which lies in $\Qp$ by (\ref{eq:zsp}).
\end{proof}

\begin{theorem} \mylabel{th:resglobal}
Assume
  \begin{equation}\label{eq:pqP}
p, q \in \P \sm \E,      
\qquad p\prec q,   \qquad   \mcal O(p) \not \prec \mcal O(q).
\end{equation}
  Then $[p,q]$ is resonant.
\end{theorem}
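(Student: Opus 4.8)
The plan is to reduce to Proposition \ref{th:resprop}, using $p$ itself as the distinguished point: since $p\in[p,q]\cap(\P\sm\E)$, it suffices to show that
\[
z\in[p,q]\cap(\P\sm\E)\ \implies\ \frac{\msf{per}(z)}{\msf{per}(p)}\in\Qp .
\]
Write $r:=\msf{per}(p)$, $s:=\msf{per}(q)$, $t:=\msf{per}(z)$; these are positive because $p,q,z\in\P\sm\E$. One observation will be used repeatedly: applying Theorem \ref{th:poa} directly to the pair $(p,q)$, which satisfies $p\prec q$ and $\mcal O(p)\not\prec\mcal O(q)$ by \eqref{eq:pqP}, gives $r/s\in\QQ$, hence $r/s\in\Qp$ and $s/r\in\Qp$.

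\textbf{The case analysis.} Fix $z\in[p,q]\cap(\P\sm\E)$, so $p\preceq z\preceq q$. If $z=p$ the ratio is $1$. Otherwise $p\prec z$, and I would split on whether $\mcal O(p)\prec\mcal O(z)$. If $\mcal O(p)\not\prec\mcal O(z)$, then Theorem \ref{th:poa} applied to $(p,z)$ yields $r/t\in\QQ$, hence $t/r\in\Qp$. If instead $\mcal O(p)\prec\mcal O(z)$: the subcase $z=q$ would give $\mcal O(p)\prec\mcal O(q)$, contradicting \eqref{eq:pqP}, so $z\prec q$; and the subcase $\mcal O(z)\prec\mcal O(q)$ would give, via transitivity and irreflexivity of $\prec$ applied through any point of $\mcal O(z)$, that $\mcal O(p)\prec\mcal O(q)$, again contradicting \eqref{eq:pqP}. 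Hence $\mcal O(z)\not\prec\mcal O(q)$, and Theorem \ref{th:poa} applied to $(z,q)$ gives $t/s\in\QQ$, so $t/s\in\Qp$; combined with $s/r\in\Qp$ this yields $t/r=(t/s)(s/r)\in\Qp$.

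\textbf{Conclusion and the main point to watch.} In every case $\msf{per}(z)/\msf{per}(p)\in\Qp$, so Proposition \ref{th:resprop} (with the point called $q$ there taken to be $p$) shows $[p,q]$ is resonant. The only genuine input is Theorem \ref{th:poa}; everything else is a finite case check. The step I expect to need the most care is confirming that the ``chained'' branches are vacuous: that $\mcal O(p)\prec\mcal O(z)$ together with either $z=q$ or $\mcal O(z)\prec\mcal O(q)$ forces the forbidden relation $\mcal O(p)\prec\mcal O(q)$. This rests only on transitivity of $\preceq$ and the fact that $a\prec w\prec b$ precludes $a=b$, but it must be spelled out since $\prec$ between \emph{sets} is not literally a transitive relation.
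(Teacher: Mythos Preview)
Your proof is correct and rests on the same two ingredients as the paper's (Theorem \ref{th:poa} and Proposition \ref{th:resprop}), but the organization differs. The paper pivots on $q$ rather than $p$: it proves in a single stroke, by applying monotonicity of $\psi$ to the relation $p\preceq z$, that $\mcal O(z)\not\prec\mcal O(q)$ for every $z$ with $p\preceq z\prec q$. Then Theorem \ref{th:poa} applied once to each pair $(z,q)$ yields $\msf{per}(z)/\msf{per}(q)\in\Qp$ directly, with no case split and no preliminary application to $(p,q)$. Your dichotomy on whether $\mcal O(p)\prec\mcal O(z)$ is sound but avoidable: in the branch where it holds you end up establishing exactly the paper's implication $\mcal O(z)\not\prec\mcal O(q)$ anyway (via your set-level transitivity remark), and then chaining ratios through $s/r$. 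The paper's route is shorter and uses monotonicity of the flow rather than a separate transitivity argument; yours has the minor virtue of making explicit that the chaining step is purely order-theoretic and does not itself invoke the dynamics.
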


\begin{proof} Note that 
  \begin{equation}\label{eq:pzq}
p\preceq  z \prec q \implies \mcal O(z)\not \prec \mcal O(q).
\end{equation}
For if this is false, there exists $z \in [p,q]$ such that
\[ \mcal O(z) \prec \mcal O(q),\]
whence  monotonicity implies
\[\big(\forall\, w\in\mcal O(p)\big) \ \big(\exists\, w'\in \mcal O(z)\big)\co\quad
w\preceq w'\prec \mcal O (q).
\]
It follows that  $\mcal O(p)\prec \mcal O(q)$, contrary to hypothesis.   
Resonance of $[p,q]$ now follows from Equation (\ref{eq:pzq}), and
Theorem \ref{th:poa} with the parameters $a:=p, \ b:=q$.
\end{proof}

The next result will be used to derive the Main  Theorem from the
analogous reuslt  for  homeomorphisms, Theorem \ref{th:lemmens}.

\begin{theorem}\mylabel{th:rescor}
Assume  $Y$ is resonant, $s >0$ and   $\P^s  \ne\varnothing$. Then $ \P (\psi)= \P (\psi^s)$.
\end{theorem}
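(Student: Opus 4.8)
The plan is to establish the two inclusions separately, with resonance doing all the real work in one direction. The inclusion $\P(\psi^s)\subseteq\P(\psi)$ is immediate and uses nothing beyond the definitions: if $x\in\P(\psi^s)$ then $(\psi^s)^n x=x$ for some $n\in\Np$, i.e. $\psi^{ns}x=x$ with $ns>0$, so $x\in\F(\psi^{ns})\subseteq\P(\psi)$.

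For the reverse inclusion $\P(\psi)\subseteq\P(\psi^s)$, I would fix $x\in\P(\psi)$ and split into two cases. If $x\in\E$, then $\psi^t x=x$ for all $t\in\RR$; taking $t=s$ gives $x\in\F(\psi^s)\subseteq\P(\psi^s)$. If instead $x\in\P\sm\E$, then $r:=\msf{per}(x)$ is defined, $r>0$, and $\psi^r x=x$. Since $\P^s\ne\varnothing$, choose $q\in\P^s$; then $q\in\P\sm\E$ with $\msf{per}(q)=s$. Because $Y$ is resonant and $x,q\in Y\cap(\P\sm\E)$, Definition \ref{th:resdef} yields $r/s=\msf{per}(x)/\msf{per}(q)\in\Qp$. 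Write $r/s=m/n$ with $m,n\in\Np$, so that $nr=ms$. Iterating $\psi^r x=x$ a total of $n$ times gives $\psi^{nr}x=x$, hence $(\psi^s)^m x=\psi^{ms}x=\psi^{nr}x=x$, so $x\in\F\big((\psi^s)^m\big)\subseteq\P(\psi^s)$. In both cases $x\in\P(\psi^s)$, which finishes the argument.

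I do not anticipate a genuine obstacle here; this is essentially a short bookkeeping argument. The two points that need a little care are: first, $\msf{per}$ is only defined on $\P\sm\E$, so the stationary points must be disposed of separately (trivially, since they lie in $\F(\psi^t)$ for every $t$); and second, one must clear denominators in $r/s\in\Qp$ correctly, so as to land in $\bigcup_{n\in\Np}\F\big((\psi^s)^n\big)=\P(\psi^s)$ rather than merely in $\bigcup_{t\in\RR}\F(\psi^t)$. The hypothesis $\P^s\ne\varnothing$ enters precisely to supply a reference orbit of period exactly $s$, against which resonance can measure every other period.
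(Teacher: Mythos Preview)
Your proof is correct and follows essentially the same route as the paper's: fix a reference point of period $s$, dispose of stationary points trivially, and for a nonstationary periodic point use resonance to clear denominators in the period ratio and land in $\P(\psi^s)$. The only difference is cosmetic---you also spell out the trivial inclusion $\P(\psi^s)\subseteq\P(\psi)$, which the paper leaves implicit.
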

\begin{proof}  
 We  fix $p\in \P^s$ and show that every 
  $q\in \P (\psi)$ lies $\P (\psi^s)$.
  If then $q$ is stationary then $q\in \P (\psi^s)$.
  If  $q\in \P^r$ then  $r >0$ and resonance implies
$ sk = rl$ with $ k, l \in \Np.$

  Since $\psi^r q =q$, we have
  $ T^kq = \big(\psi^s\big)^kq=  \big(\psi^r\big)^lq =q$, and again
  $q\in \P (\psi^s)$.
\end{proof}

\section{Proof of Theorem \ref{th:main} }   \mylabel{sec:proofmain}

Recall the statement of the Theorem:

\smallskip
\noindent
{\em Assume:
  \begin{description}
 
  \item[(H1)] $K\subset \R n$ is an order cone,

  \item[(H2)] $X\subset \R n$ is open and connected,

  \item[(H3)] $X$ is $K$-ordered,

  \item[(H4)] the flow $(\v, X)$ monotone and densely periodic.
  \end{description}
  Then $\v$ is  periodic.
}

\medskip
The proof relies on two sufficient conditions for periodicity of 
monotone homeomorphisms,  Theorems  \ref{th:lemmens} and
\ref{th:monty} below. 

\begin{definitions*} 
A  homeomorphism $T\co W \approx W$ is:

\begin{itemize}

\item  {\em densely periodic}\,   if $\P(T)$ is dense in $W$,
  
\item  {\em pointwise periodic}\, if $\P(T)= W$,
  
\item  {\em  periodic}\,  if  $T^k=id_W$  for some $k \in \Np$.
\end{itemize}

\end{definitions*}

The key to Theorem \ref{th:main} is this striking result:
\begin{theorem}[{\sc B. Lemmens} {\em et al.}, \cite {Lemmens17}]         \mylabel{th:lemmens}
Assume {\em (H1), (H2), (H3)}. Then a monotone homeomorphism $T\co
X\approx X$ is periodic provided it is densely
periodic.\footnote{Conjectured, and proved for polyhedral cones, in
  {\sc M. Hirsch} \cite{Hirsch17}.}
\end{theorem}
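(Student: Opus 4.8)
The plan is to deduce periodicity of $T$ from a single, local statement: \emph{the periods of periodic points are bounded near every point}. The logical skeleton is a double reduction. First, it suffices to prove that $T$ is \emph{pointwise periodic}, i.e.\ $\P(T)=X$; for then Montgomery's theorem---that a pointwise periodic homeomorphism of a connected manifold is periodic, which is the second sufficient condition (Theorem~\ref{th:monty})---yields an $L\in\Np$ with $T^{L}=id_X$. Second, to see that an arbitrary $x_0\in X$ is periodic, it is enough to bound the periods of the periodic points lying in some neighborhood of $x_0$: if that bound is $N$, a sequence of periodic points $a_k\to x_0$ has periods in the finite set $\{1,\dots,N\}$, so infinitely many share a common value $m$, and then $T^{m}a_k=a_k\to x_0$ together with continuity forces $T^{m}x_0=x_0$, whence $x_0\in\P(T)$. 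Thus the whole theorem collapses to a \emph{local bound on periods}, and this is exactly where dense periodicity and the cone geometry must be used: for a general homeomorphism periods near a point are unbounded.

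To manufacture such a local bound I would trap the dynamics inside an order interval. Fix $x_0$. Since $K$ is solid and $\P(T)$ is dense, I can choose periodic points $a,b$ with $a\ll x_0\ll b$, so that $J:=[a,b]$ is a compact convex $n$-cell (as recorded after (H3)) and a neighborhood of $x_0$. Setting $L_0:=\operatorname{lcm}(\operatorname{per}(a),\operatorname{per}(b))$, the homeomorphism $S:=T^{L_0}$ fixes the extreme corners $a$ and $b$, so for $y\in J$ monotonicity gives $a=Sa\preceq Sy\preceq Sb=b$, i.e.\ $S(J)\subseteq J$; equality holds because $S$ is a homeomorphism and $\P(S)\supseteq\P(T)\cap J$ is dense in $J$. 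Hence $S$ is a monotone, densely periodic self-homeomorphism of the $n$-cell $J$ fixing its $\preceq$-least and $\preceq$-greatest points, and any bound on the $S$-periods in $J$ transfers to a bound on $T$-periods near $x_0$.

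The heart of the matter is then to bound the periods of such an $S$ by a constant depending only on $n$ and $K$. I would combine three ingredients: (i) every nonfixed $S$-orbit is an antichain, since a comparability $z\prec Sz$ iterates to a strictly increasing cycle $z\prec Sz\prec\cdots\prec S^{k}z=z$ (the argument already used for cycles in the paper); (ii) monotonicity carries each order subinterval with periodic corners $[a',b']$ into $[Sa',Sb']$, so the periodic points organize $J$ into an $S$-invariant nested system of cells that constrains how distinct orbits can interleave; and (iii) a combinatorial--topological bound limiting the length of an order-preserving cyclic permutation of a finite antichain in a $K$-ordered $n$-cell. For polyhedral---in particular simplicial---$K$ the $K$-order is a lattice and (iii) follows from Lemmens--Scheutzow-type bounds on periods of order-preserving maps, which is Hirsch's polyhedral case; the general order cone is the substantive step carried out by Lemmens \emph{et al.} This ingredient (iii) is the step I expect to be the real obstacle: the rigidity that a \emph{non-polyhedral} cone still caps the length of monotone cycles, with no lattice structure available to exploit. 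Everything else---the reduction to a period bound, the order-interval trapping, and the density-to-pointwise upgrade---is soft. Granting the bound on $J$, every $x_0$ is periodic by the first paragraph, so $T$ is pointwise periodic and Montgomery's theorem (Theorem~\ref{th:monty}) finishes the proof.
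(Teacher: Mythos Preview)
The paper does not prove Theorem~\ref{th:lemmens} at all: it is quoted as a result of Lemmens, van Gaans, and van Imhoff \cite{Lemmens17} and used as a black box (alongside Montgomery's theorem) in the proof of Theorem~\ref{th:main}. So there is no ``paper's own proof'' to compare your proposal against.

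Your outline is a plausible reconstruction of the architecture such a proof would have---reduce to pointwise periodicity via Montgomery, reduce that to a local period bound, and trap the dynamics in an order interval with periodic corners---and this is indeed roughly how the argument in \cite{Lemmens17} is organized. But you correctly isolate the entire content in your step~(iii): a uniform bound, depending only on $n$ and $K$, on the length of a monotone cyclic permutation of an antichain in a $K$-ordered cell, for a \emph{general} (non-polyhedral, non-lattice) cone. You do not supply this; you explicitly defer it to ``the substantive step carried out by Lemmens \emph{et al.}''. That makes the proposal circular: you have reduced Theorem~\ref{th:lemmens} to the main technical lemma of the very paper \cite{Lemmens17} whose theorem you are trying to prove. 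Everything you label ``soft'' really is soft, and everything hard is left undone.

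One minor technical point worth tightening: your justification for $S(J)=J$ (``$S$ is a homeomorphism and $\P(S)\supseteq \P(T)\cap J$ is dense'') is incomplete as stated, since monotonicity of $T$ does not by itself force monotonicity of $T^{-1}$. A clean fix: $J=[a,b]$ is compact (closed, and bounded because $K$ is pointed), so $S(J)$ is closed; every periodic point $p\in J$ satisfies $p=S^{m}p\in S^{m}(J)\subseteq S(J)$, and density of such $p$ then gives $J\subseteq S(J)$, hence equality.
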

We also use an elegant result from the early days of transformation groups:
\begin{theorem}[{\sc D. Montgomery}, \cite {Monty37, Monty55}] \mylabel{th:monty}
  A homeomorphism of a connected topological manifold is
 periodic provided it is  pointwise
periodic.\footnote{There are  analogs of
  Montgomery's Theorem for countable transformation groups; see
  {\sc Kaul} \cite{Kaul71},
  {\sc Roberts} \cite {Roberts75},
  {\sc Yang} \cite {Yang71}.  Pointwise periodic homeomorphisms
  on compact metric spaces are investigated in {\sc Hall \& Schweigert} \cite
  {Hall82}.}
\end{theorem}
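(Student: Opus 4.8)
The plan is to reduce the global statement to the claim that a single iterate of the homeomorphism is the identity, and to produce that iterate by a Baire category argument. Write $M$ for the connected manifold and $h$ for the homeomorphism, and for each $k\in\Np$ put $F_k:=\F(h^k)=\{x\co h^k(x)=x\}$, which is closed because $h$ is continuous. Pointwise periodicity says precisely that $M=\bigcup_{k\in\Np}F_k$, and $F_k\subseteq F_{mk}$ for every $m$ since $h^k(x)=x$ forces $h^{mk}(x)=x$; hence the sets $G_m:=F_{m!}$ form an increasing closed cover of $M$. A connected (metrizable) manifold is locally compact Hausdorff, hence a Baire space, so some $G_N=F_N$ has nonempty interior. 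That is: there is an integer $N$ and a nonempty open set $U$ with $h^N=id$ on $U$.

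It remains to promote ``$h^N=id$ on a nonempty open set'' to ``$h^N=id$ on all of $M$.'' Set $g:=h^N$ and let $V:=\Int\F(g)$, a nonempty open set. Since $\F(g)$ is closed and $M$ is connected, it suffices to prove that $\F(g)$ is \emph{open}: then $\F(g)$ is a nonempty clopen subset of $M$, so $\F(g)=M$, giving $h^N=id_M$ and periodicity. Equivalently, I must rule out the existence of a frontier point $x_0\in\ov V\setminus V$; note that any such $x_0$ satisfies $g(x_0)=x_0$, because $\F(g)$ is closed and contains $V$.

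The heart of the matter, and the main obstacle, is the local analysis at such a hypothetical frontier point $x_0$, and this is exactly where the manifold hypothesis becomes indispensable: pointwise periodic homeomorphisms of general connected spaces need not be periodic, so any correct argument must use dimension. Working in a Euclidean chart centred at $x_0$, one knows that $g$ fixes the open set $V$, whose closure reaches $x_0$; but fixing an open set with $x_0$ on its frontier does \emph{not} by itself force $g$ to fix a neighbourhood of $x_0$ (a homeomorphism of $\R n$ can fix a half-space and move the rest). The extra input must be that every point near $x_0$ is itself $h$-periodic, so on the complement $M\setminus\F(g)$ the period function $x\mapsto\msf{per}(x)$ is forced to take arbitrarily large values as one approaches the bad set. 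I would seek a contradiction from this accumulation of unbounded periods by an invariance-of-domain argument: apply the Baire category step a second time, now inside the frontier $\ov V\setminus V$ (or inside suitable level sets of the period function), to locate a piece on which some fixed iterate of $h$ is the identity, and then use invariance of domain, together with the fixed open set $V$ lying against this piece, to conclude that $g$ is the identity on a full $n$-dimensional neighbourhood of an interior point of the piece, contradicting that the point lies on the frontier of $V$. The Baire reduction of the first paragraph and the connectedness conclusion of the second are routine; carrying out this local propagation step rigorously is Montgomery's genuine contribution and the hard part of the theorem.
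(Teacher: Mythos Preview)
The paper does not prove this theorem at all: it is quoted as a classical result of Montgomery, with references to \cite{Monty37, Monty55}, and is used as a black box in deriving Theorem~\ref{th:main} from Lemma~\ref{th:P}. So there is no ``paper's own proof'' to compare against; the appropriate benchmark is Montgomery's original argument.

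Your Baire reduction in the first paragraph is correct and is indeed the standard opening move: the sets $F_{m!}$ form an increasing closed cover of a Baire space, so some $h^N$ is the identity on a nonempty open set. The connectedness reduction in the second paragraph is also fine: once one knows $\F(h^N)$ is open, it is nonempty clopen and hence all of $M$.

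The gap is exactly where you say it is, and you are candid about it: you have not proved that $\F(g)$ is open. Your proposed mechanism---a second Baire step inside the frontier $\ov V\setminus V$ combined with invariance of domain---is suggestive but not an argument. The frontier $\ov V\setminus V$ need not be a manifold (or even locally Euclidean of any fixed dimension), so it is unclear what invariance of domain would be applied to; and finding a relatively open piece of the frontier on which some $h^m$ is the identity does not by itself produce an $n$-dimensional open set in $M$ on which $h^m$ is the identity. Montgomery's actual local argument is more delicate: he works in a chart, uses the local structure of fixed-point sets of finite-order homeomorphisms (in particular that a periodic homeomorphism of $\R n$ cannot fix a set with nonempty interior without fixing a full neighbourhood), and controls the periods near the frontier. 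As written, your sketch identifies the right obstacle but does not supply the idea that overcomes it, so the proposal is an outline rather than a proof.
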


\begin{lemma}\mylabel{th:P}
$\v$ is pointwise periodic
\end{lemma}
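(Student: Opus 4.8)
The plan is to cover $X\sm\E$ by open \emph{invariant} sets on each of which some time-$t$ map of $\v$ is the identity; since $\E\subseteq\P(\v)$, this yields $\P(\v)=X$. So fix $x\in X\sm\E$. As $\E$ is closed, $X\sm\E$ is an open neighbourhood of $x$; fixing $e\in\Int K$, the open sets $x-\Int K$ and $x+\Int K$ meet every neighbourhood of $x$, so by density of $\P(\v)$ I can choose $p_k,q_k\in\P\sm\E$ with $p_k\to x$, $q_k\to x$ and $p_k\ll x\ll q_k$ (hence $p_k\prec q_k$ and $x\in[[p_k,q_k]]$). If $\mcal O(p_k)\prec\mcal O(q_k)$ held for every $k$, Lemma \ref{th:colimit} would force $x\in\E$; so for some $k$ we get $p:=p_k$, $q:=q_k$ in $\P\sm\E$ with $p\prec q$ and $\mcal O(p)\not\prec\mcal O(q)$. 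Theorem \ref{th:resglobal} then shows $[p,q]$, hence the convex open set $[[p,q]]\ni x$, is resonant.

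Next I pass to the flow-orbit $W:=\bigcup_{t\in\RR}\v^t\big([[p,q]]\big)=\bigcup_{y\in[[p,q]]}\mcal O(y)$. This $W$ is open (a union of homeomorphic images of the open set $[[p,q]]$), invariant under $\v$, and contains $x$; it is connected because each orbit $\mcal O(y)$ is connected and meets the connected (convex) set $[[p,q]]\subseteq W$; it is $K$-ordered; and it is resonant by Lemma \ref{th:resorbit}, being the orbit of the resonant set $[[p,q]]$. Moreover periodic points of $\v$ are dense in the open set $W$, with $\P(\v\,|\,W)=\P(\v)\cap W$. Hence $(\v\,|\,W,\,W)$ is a monotone, densely periodic, resonant flow on an open connected $K$-ordered subset of $\R n$.

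To finish, choose $y_0\in W\cap(\P\sm\E)$ — possible since $W\cap(X\sm\E)$ is a nonempty open set in which $\P\sm\E$ is dense — and put $s:=\msf{per}(y_0)>0$, so $\P^s(\v\,|\,W)\neq\varnothing$. By Theorem \ref{th:rescor}, $\P(\v\,|\,W)=\P(\v^s\,|\,W)$, so the monotone homeomorphism $\v^s\,|\,W$ of $W$ is densely periodic; since $W$ satisfies (H1)--(H3), Theorem \ref{th:lemmens} makes $\v^s\,|\,W$ periodic, i.e.\ $\v^{sk}y=y$ for all $y\in W$ and some $k\in\Np$. In particular $\v^{sk}x=x$, so $x\in\P(\v)$, and as $x\in X\sm\E$ was arbitrary, $\P(\v)=X$.

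The genuinely new step is the passage to $W$: Theorem \ref{th:resglobal} only produces a resonant \emph{order interval}, which is not flow-invariant, and the point is that its flow-orbit is still open, connected, and resonant, hence a legitimate arena for Theorems \ref{th:rescor} and \ref{th:lemmens}. The other place needing care is the construction of $p,q$ with $\mcal O(p)\not\prec\mcal O(q)$, where non-stationarity of $x$ is essential and enters through the contrapositive of Lemma \ref{th:colimit}.
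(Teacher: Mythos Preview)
Your proof is correct and follows essentially the same route as the paper's: use Lemma~\ref{th:colimit} (contrapositively) together with Theorem~\ref{th:resglobal} to find a resonant open order interval $[[p,q]]\ni x$, pass to its flow-orbit $W=\mcal O([[p,q]])$ (open, connected, invariant, resonant by Lemma~\ref{th:resorbit}), then apply Theorem~\ref{th:rescor} and Theorem~\ref{th:lemmens} to a suitable time-$s$ map. Your write-up is in fact more explicit than the paper's in justifying the choice of $p,q$ with $\mcal O(p)\not\prec\mcal O(q)$ and the connectedness of $W$; the only cosmetic difference is that the paper first reduces to $\E=\varnothing$ by restricting to a component of $X\sm\E$, whereas you work directly with $p_k,q_k\in\P\sm\E$ near $x$.
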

\begin{proof}
Since $\E\subset \P(\v)$, it suffices to prove that the restriction of
 $\varphi$ to each component of $X\,\verb=\=\E$ is pointwise periodic.
 Therefore we assume $\E=\empty$.
 
Let $x \in X$ be arbitrary.  As $\P(\v)$ is dense, Lemma
\ref{th:colimit} and Theorem \ref{th:resglobal} show there exist $p, q\in
\P(\v)$ such that the open set $[[p,q]]$ is connected and resonant, and
contains $x$.  Therefore the open set $Y:=\mcal O([[p,q]])$, which is 
invariant and connected, is resonant by Lemma \ref{th:resorbit}.

By Theorem \ref{th:rescor} there exists $s >0$ such that
$\P (\v^s) = \P (\varphi)\cap Y$. Consequently $\v^s$ is densely
periodic, hence periodic by Theorem \ref{th:lemmens}.  Therefore $x\in
\P(\v)$. \end{proof}
 Theorem \ref{th:main} follows from Lemma \ref{th:P} and 
Theorem \ref{th:monty}.


\end{document}